\DeclareFontFamily{OMS}{rsfs}{\skewchar\font'60}
\DeclareFontShape{OMS}{rsfs}{m}{n}{<-5>rsfs5 <5-7>rsfs7 <7->rsfs10 }{}
\DeclareSymbolFont{rsfs}{OMS}{rsfs}{m}{n}
\DeclareSymbolFontAlphabet{\scr}{rsfs}
\renewcommand{\O}{\sO}
\newcommand{\Q}{\mathbb{Q}}
\newcommand\resto[1]{\hbox{\hbox{$\vert{}_{_{#1}}$}}}
\newcommand{\ratmap}{\dasharrow} 
\newcommand{\sA}{\scr{A}}
\newcommand{\sO}{\scr{O}}
\DeclareMathOperator{\Sym}{Sym}
\DeclareMathOperator{\Var}{Var}
\newcounter{thisthm}
\newcommand{\iref}[1]{(\thesection.\the\value{thisthm}.\the\value{#1})}
\theoremstyle{plain}    
\newtheorem{thm}{Theorem}[section]
\numberwithin{equation}{thm}
\numberwithin{figure}{section}
\theoremstyle{plain}    
\newtheorem{cor}[thm]{Corollary}
\newtheorem{conjecture}[thm]{Conjecture}
\theoremstyle{plain}    
\newtheorem{proclaim-special}[thm]{\specialthmname}
\theoremstyle{remark}
\newtheorem{rem}[thm]{Remark}
\newtheorem{claim}[thm]{Claim} 
\newtheorem*{claim*}{Claim}
\newtheoremstyle{bozont-remark}{3pt}{3pt}%
     {}
     {}
     {\it}
     {.}
     {.5em}
     {\thmname{#1}\thmnumber{ #2}: \thmnote{\sc #3}}
\theoremstyle{bozont-remark}
\def\factor#1.#2.{\left. \raise 2pt\hbox{$#1$} \right/\hskip -2pt\raise
  -2pt\hbox{$#2$}}
\newlength{\swidth}
\newenvironment{enumerate-p}{
  \begin{enumerate}}
  {\setcounter{equation}{\value{enumi}}\end{enumerate}}
\date{\today}
\author{Stefan Kebekus}
\author{S\'andor J.\ Kov\'acs}
\thanks{Stefan Kebekus was supported in part by the DFG-Forschergruppe
  ``Classification of Algebraic Surfaces and Compact Complex
  Manifolds''.  S\'andor Kov\'acs was supported in part by NSF Grant
  DMS-0554697 and the Craig McKibben and Sarah Merner Endowed
  Professorship in Mathematics.}
\address{Stefan Kebekus, Mathematisches Institut, Universit\"at zu
  K\"oln, Weyertal 86--90, 50931 K\"oln, Germany}
\email{\href{mailto:stefan.kebekus@math.uni-koeln.de}{stefan.kebekus@math.uni-koeln.de}}
\urladdr{\href{http://www.mi.uni-koeln.de/~kebekus}{http://www.mi.uni-koeln.de/$\sim$kebekus}}
\address{\noindent S\'andor Kov\'acs, University of Washington,
  Department of Mathematics, Box 354350, Seattle, WA 98195, U.S.A.}
\email{\href{mailto:kovacs@math.washington.edu}{kovacs@math.washington.edu}}
\urladdr{\href{http://www.math.washington.edu/~kovacs}{http://www.math.washington.edu/$\sim$kovacs}}
\numberwithin{equation}{subsection}
\begin{document}

\title{Families of varieties of general type over compact bases}

\maketitle

\section{Introduction} 

Let $f: X \to Y$ be a smooth family of canonically polarized complex
varieties over a smooth base. Generalizing the classical Shafarevich
hyperbolicity conjecture, Viehweg conjectured that $Y$ is necessarily
of log general type if the family has maximal variation. We refer to
\cite{KK05} for a precise formulation, for background and for details
about these notions. A somewhat stronger and more precise version of
Viehweg's conjecture was shown in \cite{KK05} in the case where $Y$ is
a quasi-projective surface. Assuming that the minimal model program
holds, this very short paper proves the same result for projective
base manifolds $Y$ of arbitrary dimension.

We recall the two relevant standard conjectures of higher dimensional
algebraic geometry first.

\begin{conjecture}[Minimal Model Program and Abundance for $\kappa =
  0$]\label{conj:1}
  Let $Y$ be a smooth projective variety such that $\kappa(Y)=0$.
  Then there exists a birational map $\lambda : Y \dasharrow
  Y_{\lambda}$ such that the following holds.
  \begin{enumerate}
  \item $Y_\lambda$ is  $\Q$-factorial and has at worst terminal
    singularities.
  \item There exists a number $n$ such that $n K_{Y_\lambda}$ is
    trivial, i.e., $\O_{Y_\lambda}(n K_{Y_\lambda}) = \O_{Y_\lambda}$
  \end{enumerate}
\end{conjecture}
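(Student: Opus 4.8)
The plan is to derive the two conclusions directly from the standard engine of the minimal model program together with the abundance conjecture, since the statement is essentially a repackaging of these in the special case $\kappa = 0$. First I would run the MMP starting from $Y$ itself. As $Y$ is smooth it is in particular $\Q$-factorial and terminal, and every step of the program---divisorial contraction or flip---preserves both $\Q$-factoriality and terminality. Granting the existence and termination of flips, the program therefore produces a birational map $\lambda : Y \dasharrow Y_\lambda$ onto a $\Q$-factorial variety with at worst terminal singularities, which either carries a Mori fibre space structure or satisfies that $K_{Y_\lambda}$ is nef.

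To exclude the first alternative I would use the hypothesis $\kappa(Y) = 0$. A variety admitting a Mori fibre space structure is uniruled, hence has Kodaira dimension $-\infty$; but uniruledness is a birational invariant, and $Y$, having $\kappa(Y) = 0$, is not uniruled. Consequently $Y_\lambda$ cannot be a Mori fibre space, so $K_{Y_\lambda}$ is nef. This already yields conclusion (1).

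For conclusion (2) I would first record that each elementary MMP operation between $\Q$-factorial terminal varieties leaves the plurigenera $h^0(mK)$ unchanged, so that $\kappa(Y_\lambda) = \kappa(Y) = 0$. Now I would invoke abundance: since $K_{Y_\lambda}$ is nef, it is semiample, and the associated Iitaka fibration has base of dimension $\kappa(Y_\lambda) = 0$, i.e.\ a point. Hence the ample bundle descending from that base is trivial, and pulling back shows $\O_{Y_\lambda}(n K_{Y_\lambda}) \cong \O_{Y_\lambda}$ for a suitable $n$, which is exactly (2).

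The main obstacle is not the reduction above, which is routine, but the two inputs it relies on: the existence and termination of flips needed to run and stop the MMP, and the abundance theorem that upgrades nefness of $K_{Y_\lambda}$ to semiampleness. Both are among the central open problems of higher-dimensional geometry, which is precisely why the statement is phrased as a conjecture rather than a theorem; any unconditional proof would have to resolve them, so the present contribution can only be the packaging of their consequences into the clean form stated here.
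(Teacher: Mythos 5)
This statement is a \emph{conjecture} in the paper, not a theorem: the authors never prove it, and indeed cannot, since it encapsulates open problems of higher-dimensional geometry. It is only assumed as a hypothesis for the main theorem, and the paper merely remarks that it is known in dimension $\leq 3$. Your proposal is therefore not comparable to any proof in the paper, and it is not a proof of the statement either --- it is a conditional derivation from the existence and termination of flips together with the abundance conjecture, which are inputs at least as strong as the statement itself. That said, your reasoning is sound and is exactly the standard packaging: starting the MMP from the smooth (hence $\Q$-factorial, terminal) variety $Y$, each step preserves $\Q$-factoriality and terminality; a Mori fibre space outcome is excluded because it would force $Y_\lambda$, and hence the birationally equivalent $Y$, to be uniruled, contradicting $\kappa(Y)=0$ (note this uses only the unconditional implication ``uniruled $\Rightarrow \kappa = -\infty$,'' not its converse); so $K_{Y_\lambda}$ is nef, plurigenera are preserved so $\kappa(Y_\lambda)=0$, and abundance upgrades nefness to semiampleness, whence the Iitaka map contracts $Y_\lambda$ to a point and some multiple $nK_{Y_\lambda}$ is trivial. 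You also correctly and honestly identify that the two inputs are precisely what keeps the statement conjectural. In short: no logical gap, but what you have written is an explanation of why the conjecture follows from (and is essentially equivalent to a special case of) the standard MMP and abundance conjectures, which is exactly how the paper intends it to be read --- as a hypothesis, not a result.
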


\begin{conjecture}[Abundance for $\kappa = -\infty$]\label{conj:2}
  Let $Y$ be a smooth projective variety. If $\kappa(Y)=-\infty$,
  then $Y$ is uniruled.
\end{conjecture}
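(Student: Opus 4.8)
The plan is to reduce the assertion to the non-vanishing part of the Abundance Conjecture by way of the characterization of uniruledness due to Boucksom, Demailly, P\u{a}un and Peternell. Their theorem states that a smooth projective variety $Y$ is uniruled if and only if its canonical class $K_Y$ is not pseudo-effective. Since this equivalence is a theorem, the problem becomes the following: show that $\kappa(Y) = -\infty$ forces $K_Y$ to fail to be pseudo-effective. I would establish this by contraposition, assuming $K_Y$ pseudo-effective and deriving $\kappa(Y) \geq 0$.

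Concretely, and in the spirit of the hypotheses already adopted in this paper, I would run the Minimal Model Program on $Y$. The program terminates either in a minimal model $Y'$, on which $K_{Y'}$ is nef, or in a Mori fibre space $g : Y' \to Z$. In the first case Abundance for nef canonical classes yields that $K_{Y'}$ is semiample, hence $\kappa(Y') \geq 0$; since the Kodaira dimension is a birational invariant this contradicts $\kappa(Y) = -\infty$, so this case cannot occur. Thus the program must terminate in a Mori fibre space. The general fibre of $g$ is a Fano variety with terminal singularities, hence rationally connected and in particular uniruled, so $Y'$ is uniruled. As uniruledness is a birational invariant in characteristic zero, $Y$ is uniruled as well.

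The one genuinely hard step is the exclusion of a minimal model with $\kappa = -\infty$, i.e.\ the statement that a pseudo-effective canonical class has non-negative Kodaira dimension. This non-vanishing assertion is among the deepest open problems of higher-dimensional geometry and is exactly the input that the Abundance Conjecture is meant to supply; it is what makes the present statement a genuine \emph{conjecture} rather than a theorem. Every other step is either formal or covered by the cited result of Boucksom, Demailly, P\u{a}un and Peternell, so all of the difficulty is concentrated there.
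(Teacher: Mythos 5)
The statement you were asked to prove is one of the paper's two standing \emph{conjectures}, not one of its results: the paper never proves it, but assumes it (together with Conjecture~\ref{conj:1}) as a hypothesis of Theorem~\ref{thm:main}. So there is no proof in the paper to compare yours against, and indeed no proof is currently known --- the statement is a form of the non-vanishing/abundance problem, known only in low dimensions (cf.\ Remark~\ref{rem:conjinsmaldim}).

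Your proposal is, accordingly, not a proof but a reduction, and you say so yourself. The reduction is correct and standard: by the theorem of Boucksom--Demailly--P\u{a}un--Peternell, uniruledness of $Y$ is equivalent to $K_Y$ failing to be pseudo-effective, so the conjecture is equivalent to the non-vanishing statement that a pseudo-effective canonical class has $\kappa \geq 0$ --- and that is exactly the step you leave open. Note also that the MMP scaffolding you invoke is not covered by ``the hypotheses already adopted in this paper'': Conjecture~\ref{conj:1} concerns only varieties with $\kappa = 0$, so it supplies neither termination of the MMP for $Y$ nor abundance for a nef minimal model of a variety with $\kappa(Y) = -\infty$; you would in fact be assuming the full MMP and abundance conjectures, which are strictly stronger than anything the paper assumes. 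In short, your argument correctly locates where all the difficulty lies, but the gap it isolates is the entire content of the statement, so the proposal cannot be counted as a proof of it.
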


\begin{rem}\label{rem:conjinsmaldim}
  Conjectures~\ref{conj:1} and \ref{conj:2} are known to hold for all
  varieties of dimension $\dim Y \leq 3$.
\end{rem}

The main result of this paper is now the following, cf.\
\cite[Conjecture~1.6]{KK05}.

\begin{thm}\label{thm:main}
  Let $Y$ be a smooth projective variety and $f: X \to Y$ a smooth
  family of canonically polarized varieties.  Assume that
  Conjectures~\ref{conj:1} and \ref{conj:2} hold for all varieties $F$
  of dimension $\dim F \leq \dim Y$. Then the following holds.
  \begin{enumerate}
  \item If $\kappa(Y)=-\infty$, then $\Var(f) < \dim Y$.
  \item If $\kappa(Y) \geq 0$, then $\Var(f) \leq \kappa(Y)$.
  \end{enumerate}
\end{thm}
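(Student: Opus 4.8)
The plan is to reduce both statements to an upper bound for the Kodaira dimension of a single invertible sheaf. First I would invoke the Viehweg--Zuo construction, as recalled in \cite{KK05}: since $f$ is smooth over the \emph{projective} manifold $Y$ (so there is no boundary), there exist an integer $N>0$ and an invertible subsheaf $\sA \subseteq \Sym^N \Omega^1_Y$ with $\kappa(\sA) \geq \Var(f)$. It therefore suffices to prove that $\kappa(\sA) < \dim Y$ when $\kappa(Y) = -\infty$, and that $\kappa(\sA) \leq \kappa(Y)$ when $\kappa(Y) \geq 0$.

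Assume first $\kappa(Y) = -\infty$. By Conjecture~\ref{conj:2} the variety $Y$ is uniruled, hence covered by a family of rational curves whose general member $C \cong \P^1$ is free. Then $T_Y|_C$ is globally generated, so it is a direct sum of line bundles of non-negative degree; dualising, both $\Omega^1_Y|_C$ and $\Sym^N \Omega^1_Y|_C$ are direct sums of line bundles of non-positive degree, whence $\sA \cdot C \leq 0$. Since the curves $C$ sweep out $Y$ while a big line bundle meets a general member of a covering family strictly positively, $\sA$ cannot be big, i.e.\ $\kappa(\sA) < \dim Y$.

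Now assume $\kappa(Y) \geq 0$. After replacing $Y$ by a resolution (which alters neither the two Kodaira dimensions nor the existence of $\sA$) I may take the Iitaka fibration to be a morphism $\phi \colon Y \to Z$ with $\dim Z = \kappa(Y)$ and general fibre $F$ satisfying $\kappa(F) = 0$. The easy-addition inequality gives $\kappa(\sA) \leq \dim Z + \kappa(F, \sA|_F) = \kappa(Y) + \kappa(F, \sA|_F)$, so everything reduces to the statement that \emph{every invertible subsheaf $\sB \subseteq \Sym^b \Omega^1_F$ of a smooth projective $F$ with $\kappa(F)=0$ has $\kappa(F,\sB) \leq 0$}. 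That this applies to $\sA|_F$ follows by restricting the conormal sequence of $\phi$ to $F$: the resulting filtration of $\Sym^N \Omega^1_Y|_F$ has graded pieces that are direct sums of the sheaves $\Sym^c \Omega^1_F$ for $0 \le c \le N$, and the invertible sheaf $\sA|_F$ injects into one such summand, so $\sA|_F \hookrightarrow \Sym^c \Omega^1_F$ for some $c \le N$.

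For the reduced statement I would use Conjecture~\ref{conj:1} to pass to a minimal model $\lambda \colon F \dasharrow F_\lambda$, where $F_\lambda$ is $\Q$-factorial terminal with $K_{F_\lambda}$ numerically trivial. Because symmetric differentials extend across the exceptional locus of a terminal modification, $\sB$ transports to an invertible subsheaf $\sB_\lambda \subseteq \Sym^{[b]} \Omega^1_{F_\lambda}$ with $\kappa(F_\lambda, \sB_\lambda) = \kappa(F, \sB)$. As $F_\lambda$ is not uniruled, Miyaoka's generic nefness theorem shows that $\Omega^1_{F_\lambda}$ is nef on a general complete-intersection curve $C$ (which avoids the singular locus); since $K_{F_\lambda} \cdot C = 0$, the restriction $\Sym^{[b]}\Omega^1_{F_\lambda}|_C$ is nef of degree $0$, hence semistable of slope $0$, forcing $\sB_\lambda \cdot C \leq 0$. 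The curves $C$ cover $F_\lambda$, and if $\kappa(\sB_\lambda) \geq 0$ then $\sB_\lambda$ is $\Q$-effective, so $\sB_\lambda \cdot C \le 0$ forces $\sB_\lambda$ to be numerically trivial; thus $\kappa(F,\sB) = \kappa(F_\lambda,\sB_\lambda) \leq 0$, and combining everything yields $\Var(f) \leq \kappa(\sA) \leq \kappa(Y)$. The hard part is precisely this last lemma, and within it the two inputs that drive the numerical-triviality argument on the minimal model: the extension of symmetric differential forms across the exceptional locus of $F_\lambda$, and the generic nefness (equivalently, slope-$0$ semistability) of the cotangent sheaf of a terminal variety with $K$ numerically trivial; the uniruled case and the easy-addition reduction are comparatively formal.
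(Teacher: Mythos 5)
Your proposal is correct in substance, and its engine is ultimately the same as the paper's: the Viehweg--Zuo sheaf restricted to a general complete intersection curve on a minimal model with numerically trivial canonical class, where Miyaoka's theorem forces non-positive degree. But you package all three cases differently. For $\kappa(Y)=-\infty$ the paper does not use Viehweg--Zuo at all: it quotes the theorem of \cite{Kovacs96} that families of canonically polarized varieties over rational curves are isotrivial, so uniruledness immediately gives $\Var(f)<\dim Y$; your route via freeness of the general member of a covering family and non-bigness of $\sA$ is also valid, with the harmless caveat that \cite{VZ02} is invoked only when $\Var(f)>0$ (the case $\Var(f)=0$ being trivial). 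For $\kappa(Y)>0$ the paper restricts the \emph{family} to a general Iitaka fiber $F$, applies the $\kappa=0$ case to conclude $f\resto{F}$ is isotrivial, and deduces $\Var(f)\leq \dim Y-\dim F$; you instead keep the \emph{sheaf} $\sA$ and combine easy addition with the conormal-sequence filtration of $\Sym^N\Omega^1_Y\resto{F}$ to inject $\sA\resto{F}$ into some $\Sym^c\Omega^1_F$ --- a fine alternative that stays entirely at the level of Kodaira dimensions of line bundles. In the core $\kappa=0$ step, the paper avoids sheaf theory on the singular model altogether: it resolves the indeterminacy of $\lambda$ so that $\lambda:Y\to Y_\lambda$ is a morphism, notes that a general complete intersection curve $C_\lambda$ avoids both $\Sing(Y_\lambda)$ and the fundamental locus, whence $\Omega^1_{Y_\lambda}\resto{C_\lambda}\cong\Omega^1_Y\resto{C}$, and then applies Miyaoka's uniruledness criterion --- the contrapositive of the generic semipositivity statement you use.

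One point in your version needs repair, though it is fixable. You justify the transport of $\sB$ to the minimal model by asserting that ``symmetric differentials extend across the exceptional locus of a terminal modification,'' with $\kappa(F_\lambda,\sB_\lambda)=\kappa(F,\sB)$. Extension statements for \emph{symmetric} (as opposed to exterior) powers of $\Omega^1$ on singular spaces are genuinely delicate and fail already for canonical (Du Val) singularities: the invariant form $(dx)^2$ on the $A_1$-singularity defines a reflexive symmetric differential that acquires a pole on the minimal resolution. Fortunately you only need the easy inequality $\kappa(F,\sB)\leq\kappa(F_\lambda,\sB_\lambda)$, and that follows not from any extension theorem but from the fact that the MMP extracts no divisors: $\lambda^{-1}$ is an isomorphism over an open set $U_\lambda\subseteq F_\lambda$ whose complement has codimension $\geq 2$, so sections of $\sB^{\otimes m}$ restrict to $U_\lambda$ and extend to the reflexive power $\sB_\lambda^{[m]}$ by normality. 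Note also that $\sB_\lambda$ is a priori only a rank-one reflexive subsheaf, not invertible; this costs nothing, since the general complete intersection curve avoids $\Sing(F_\lambda)$, where $\sB_\lambda$ is invertible and reflexive powers are ordinary powers. With these corrections your argument goes through and is a legitimate, slightly more sheaf-theoretic variant of the paper's proof.
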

\begin{rem}
  The argumentation of Section~\ref{sec:proof} actually shows a
  slightly stronger result. If $\kappa(Y)=-\infty$, it suffices to
  assume that Conjecture~\ref{conj:2} holds for $Y$. If $\kappa(Y)
  \geq 0$, we need to assume that Conjecture~\ref{conj:1} holds for
  all varieties $F$ of dimension $\dim F = \dim Y - \kappa(Y)$.

  See Theorem~\ref{thm:main-2} below for further generalizations.
\end{rem}

Theorem~\ref{thm:main} and Remark~\ref{rem:conjinsmaldim} immediately
imply the following.

\begin{cor}
  Viehweg's conjecture holds for smooth families of canonically
  polarized varieties over projective base manifolds of dimension
  $\leq 3$. \qed
\end{cor}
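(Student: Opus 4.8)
The plan is to derive the corollary from Theorem~\ref{thm:main} by a contrapositive argument, with Remark~\ref{rem:conjinsmaldim} used to discharge the conjectural hypotheses. First I would unwind Viehweg's conjecture in the present projective setting. For a projective base manifold $Y$, being ``of log general type'' means being of general type, i.e.\ $\kappa(Y) = \dim Y$, while ``maximal variation'' means $\Var(f) = \dim Y$. Thus the statement to be proved reduces to the following concrete implication: if $f \colon X \to Y$ is a smooth family of canonically polarized varieties over a projective base manifold $Y$ with $\dim Y \leq 3$ and $\Var(f) = \dim Y$, then $\kappa(Y) = \dim Y$.

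Next I would verify that Theorem~\ref{thm:main} applies unconditionally here. Since $\dim Y \leq 3$, every variety $F$ with $\dim F \leq \dim Y$ satisfies $\dim F \leq 3$, so by Remark~\ref{rem:conjinsmaldim} Conjectures~\ref{conj:1} and~\ref{conj:2} hold for all such $F$. Hence both hypotheses of Theorem~\ref{thm:main} are met, and both of its conclusions~(1) and~(2) are available without any remaining assumption.

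Finally I would run the contrapositive. Assume $\Var(f) = \dim Y$. If $\kappa(Y) = -\infty$, conclusion~(1) would force $\Var(f) < \dim Y$, contradicting maximality; hence $\kappa(Y) \geq 0$. Then conclusion~(2) yields $\Var(f) \leq \kappa(Y)$, so that $\dim Y = \Var(f) \leq \kappa(Y) \leq \dim Y$, where the last inequality is the general bound $\kappa(Y) \leq \dim Y$ for any smooth projective variety. Therefore $\kappa(Y) = \dim Y$, i.e.\ $Y$ is of general type, which is exactly Viehweg's conjecture in this range. This deduction is entirely formal; the only point demanding care is to confirm that the dimension range $\dim F \leq \dim Y$ in which the conjectures are invoked is contained in the range $\dim \leq 3$ covered by Remark~\ref{rem:conjinsmaldim}, and there is no genuine obstacle.
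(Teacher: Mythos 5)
Your proposal is correct and is exactly the paper's argument: the paper deduces the corollary by combining Theorem~\ref{thm:main} with Remark~\ref{rem:conjinsmaldim}, which is precisely the formal contrapositive deduction you spell out. You have merely made explicit the routine steps (unwinding ``log general type'' to $\kappa(Y)=\dim Y$ in the projective case, ruling out $\kappa(Y)=-\infty$ via conclusion~(1), and sandwiching $\dim Y = \Var(f) \leq \kappa(Y) \leq \dim Y$ via conclusion~(2)) that the paper leaves to the reader with the phrase ``immediately imply.''
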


\section{Proof of Theorem~\ref*{thm:main}}
\label{sec:proof}

\subsection{The case where $\boldsymbol{\kappa(Y)=-\infty}$} 

The assertion follows immediately from Conjecture~\ref{conj:2} and
from the fact that families of canonically polarized varieties over
rational curves are necessarily isotrivial \cite[Thm.~1]{Kovacs96}.

\subsection{The case where $\boldsymbol{\kappa(Y)=0}$}
\label{subsect:kappa=0}

In this case, we need to show that the family $f$ is isotrivial. We
argue by contradiction and assume that $\Var(f) \geq 1$. By
\cite[Thm.~1.4.i]{VZ02}, this implies that there exists a number $n$
and an invertible subsheaf $\sA \subset \Sym^n\Omega^1_Y$ of
Kodaira-Iitaka dimension $\kappa(\sA) \geq \Var(f) \geq 1$.

By assumption, there exists a birational map $\lambda: Y \ratmap
Y_\lambda$ as discussed in Conjecture~\ref{conj:1}. Resolving the
indeterminacies of $\lambda$ and pulling back the family $f$, we may
assume without loss of generality that $\lambda$ is a morphism, i.e.,
defined everywhere.

Let $C_\lambda \subset Y_\lambda$ be a general complete intersection
curve. Then $C_\lambda$ will avoid the singularities of $Y_\lambda$.
In particular, the restriction $\Omega^1_{Y_\lambda}\resto{C_\lambda}$
is a vector bundle of degree
\begin{equation}\label{eq:1}
  \deg \Omega^1_{Y_\lambda}\resto{C_\lambda} = K_{Y_\lambda}\cdot
  C_\lambda = 0.
\end{equation}

\begin{claim}\label{claim:1}
  The vector bundle $\Omega^1_{Y_\lambda}\resto{C_\lambda}$ is not
  semi-stable.
\end{claim}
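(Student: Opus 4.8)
The plan is to argue by contradiction. Suppose that $E := \Omega^1_{Y_\lambda}\resto{C_\lambda}$ is semi-stable. By \eqref{eq:1} its slope is $\mu(E) = 0$, and since we work over $\C$, every symmetric power $\Sym^n E$ is then again semi-stable, of slope $\mu(\Sym^n E) = n\cdot\mu(E) = 0$. Consequently $\Sym^n E$ admits no subsheaf of positive degree; in particular no invertible subsheaf of positive degree. The strategy is to contradict this by transporting the sheaf $\sA \subset \Sym^n \Omega^1_Y$ of the previous step onto the curve $C_\lambda$ and showing that the resulting line subsheaf of $\Sym^n E$ has strictly positive degree.

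For the transport, recall that $\lambda : Y \to Y_\lambda$ is a birational morphism from the smooth variety $Y$ onto the normal variety $Y_\lambda$. The locus $Z \subset Y_\lambda$ over which $\lambda$ is not an isomorphism therefore has codimension at least two: it is the union of $\Sing(Y_\lambda)$ with the image of the $\lambda$-exceptional set, and a birational morphism contracts every exceptional divisor to a subvariety of codimension $\geq 2$. As $C_\lambda$ is a general complete intersection curve, it avoids $Z$, so $\lambda$ is an isomorphism near $C_\lambda$ and its inverse identifies $C_\lambda$ with a curve $C := \lambda^{-1}(C_\lambda) \subset Y$ and $E = \Omega^1_{Y_\lambda}\resto{C_\lambda}$ with $\Omega^1_Y\resto{C}$. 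Restricting the inclusion $\sA \subset \Sym^n \Omega^1_Y$ to $C$ then exhibits $\sA\resto{C}$ as an invertible subsheaf of $(\Sym^n\Omega^1_Y)\resto{C} = \Sym^n E$.

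It remains to show that $\deg\big(\sA\resto{C}\big) > 0$. Since $\kappa(\sA) \geq 1$, a suitable power $\sA^{\otimes m}$ has a linear system whose moving part has positive-dimensional image. As $C_\lambda$ varies, the curves $C = \lambda^{-1}(C_\lambda)$ sweep out a dense subset of $Y$, so a general such $C$ is not contracted by the associated map and meets the moving part; hence $\deg\big(\sA\resto{C}\big) > 0$. This positive-degree invertible subsheaf of the slope-zero semi-stable bundle $\Sym^n E$ is the desired contradiction, proving that $E$ is not semi-stable. The main point requiring care --- and the only real obstacle --- is to arrange a single general curve $C_\lambda$ that simultaneously avoids $Z$ (so that the identification $E \cong \Omega^1_Y\resto{C}$ and the degree computation \eqref{eq:1} are valid) and is generic enough to force $\deg\big(\sA\resto{C}\big) > 0$. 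Because each of these is an open, dense condition on complete intersections of sufficiently high degree, a general $C_\lambda$ satisfies all of them at once.
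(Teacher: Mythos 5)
Your proof follows exactly the paper's route: identify $\Omega^1_{Y_\lambda}\resto{C_\lambda}$ with $\Omega^1_Y\resto{C}$ using that $\lambda$ is an isomorphism near the general complete intersection curve, invoke semi-stability of symmetric powers in characteristic zero together with \eqref{eq:1}, and derive a contradiction from the restriction of $\sA$. The paper itself asserts the key positivity $\deg\big(\sA\resto{C}\big)>0$ without proof, so your attempt to justify it is the only place where you go beyond the paper --- and that is precisely where your argument has a flaw.

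The inference ``the curves $C$ sweep out a dense subset of $Y$, so a general such $C$ is not contracted by the associated map and meets the moving part'' is a non-sequitur. Density of the swept-out locus implies neither property: the fibers of the projection $\P^1\times\P^1\to\P^1$ sweep out the whole surface, yet every one of them is contracted by the projection, and a general one is disjoint from any fixed fiber. What actually makes the step work is not density but ampleness: a complete intersection curve of ample divisors has positive intersection number with \emph{every} nonzero effective divisor. Concretely, choose $m$ with $\dim|\sA^{\otimes m}|\geq 1$ and write $|\sA^{\otimes m}|=|M|+F$ with $F$ the fixed part. Since no $\lambda$-exceptional prime divisor is a fixed component of $|M|$ and there are only finitely many of them, a general member $M_0\in|M|$ has no $\lambda$-exceptional component, so $\lambda_*M_0\neq 0$ is a nonzero effective divisor on $Y_\lambda$. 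As $C$ avoids the exceptional locus and is not contained in $\supp F$, one gets
\begin{equation*}
  m\deg\big(\sA\resto{C}\big) \;=\; (M_0+F)\cdot C \;\geq\; M_0\cdot C \;=\; \lambda_*M_0\cdot C_\lambda \;>\;0,
\end{equation*}
the last inequality because $C_\lambda$ is a complete intersection of ample divisors. (The intermediate point that $M_0$ cannot be entirely $\lambda$-exceptional is needed, since a fully exceptional $M_0$ would be disjoint from $C$ and give degree zero.) With this replacement your proof is complete and coincides with the paper's.
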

\begin{proof}[Proof of Claim~\ref{claim:1}]
  Observe that the curve $C_\lambda$ avoids the fundamental points of
  $\lambda$, and hence that $\lambda$ is an isomorphism in a
  neighborhood of $C_\lambda$. Setting $C := \lambda^{-1}(C_\lambda)$,
  the morphism $\lambda$ induces an isomorphism
  $\Omega^1_{Y_\lambda}\resto{C_\lambda} \cong \Omega^1_Y\resto{C}$.
  This shows that $\Omega^1_{Y_\lambda}\resto{C_\lambda}$ cannot be
  semi-stable, for if it was, its symmetric product $\Sym^n
  \Omega^1_{Y_\lambda}\resto{C_\lambda}$ would also be semistable of
  degree $0$. However, this contradicts the existence of the subsheaf
  $\sA$ whose restriction to $C$ has positive degree.
\end{proof}

To end the proof, observe that \eqref{eq:1} and Claim~\ref{claim:1}
together imply that $\Omega^1_{Y_\lambda}\resto{C_\lambda}$ has an
invertible quotient of negative degree. In this setup, Miyaoka's
uniruledness criterion, cf.\ \cite[Cor.~8.6]{Miy85}, \cite{KST07} or
\cite[Chapt.~2.1]{KS06}, applies to show that $Y$ is uniruled,
contradicting the assumption that $\kappa(Y)=0$.

\subsection{The case where $\boldsymbol{\kappa(Y) > 0}$} 

In this case, consider the Iitaka fibration of $Y$, $\mathfrak i:Y'\to
Z$. Since the Iitaka model is only determined birationally, we may
assume that there exists a birational morphism $Y'\to Y$.  Pulling the
family $f:X\to Y$ back to $Y'$, we may assume that $Y'=Y$, and hence
we may assume that there exists a fibration $\mathfrak i:Y\to Z$ such
that $\dim Z=\kappa(Y)$ and $\kappa(F)=0$ for the general fiber $F$ of
$\mathfrak i$ \cite[Thm.~11.8]{Iitaka82}.  We have seen in
Section~\ref{subsect:kappa=0} that $f\resto F$ is isotrivial and hence
$\Var(f)\leq \dim Y-\dim F =\dim Z =\kappa(Y)$.  This finishes the
proof of Theorem~\ref{thm:main}.

\section{Families of varieties of general type}

Using \cite[Thm.~1.4.iii]{VZ02}, the argumentation of
Section~\ref{sec:proof} immediately gives the following, somewhat
weaker, result for families of varieties of general type.

\begin{thm}\label{thm:main-2}
  Let $Y$ be a smooth projective variety and $f: X \to Y$ a smooth
  family of varieties of general type of maximal variation, i.e.,
  $\Var(f) = \dim Y$. If Conjectures~\ref{conj:1} and \ref{conj:2}
  hold for all varieties $F$ of dimension $\dim F \leq \dim Y$, then
  $Y$ is of general type. \qed
\end{thm}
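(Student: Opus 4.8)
The plan is to repeat the argument of Section~\ref{sec:proof} almost verbatim, feeding in \cite[Thm.~1.4.iii]{VZ02} in place of \cite[Thm.~1.4.i]{VZ02}. Concretely, maximal variation $\Var(f)=\dim Y$ produces, by \cite[Thm.~1.4.iii]{VZ02}, a number $n$ and an invertible subsheaf $\sA \subset \Sym^n\Omega^1_Y$ that is big, i.e.\ of maximal Kodaira-Iitaka dimension $\kappa(\sA)=\dim Y$. Arguing by contradiction, I would assume that $Y$ is \emph{not} of general type, so that $\kappa(Y) < \dim Y$, and then run through the same trichotomy $\kappa(Y) = -\infty$, $\kappa(Y)=0$, $\kappa(Y)>0$ used in the proof of Theorem~\ref{thm:main}.

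The case $\kappa(Y)=0$ is immediate: here $\sA$ is itself a big invertible subsheaf of $\Sym^n\Omega^1_Y$, so the argument of Section~\ref{subsect:kappa=0} applies word for word. Passing to the model $Y_\lambda$ of Conjecture~\ref{conj:1} and to a general complete intersection curve $C_\lambda$, for which $\deg\Omega^1_{Y_\lambda}\resto{C_\lambda}=0$ by \eqref{eq:1}, the positive-degree subsheaf $\sA$ shows that $\Omega^1_{Y_\lambda}\resto{C_\lambda}$ is not semi-stable (Claim~\ref{claim:1}), hence carries an invertible quotient of negative degree, and Miyaoka's criterion forces $Y$ to be uniruled --- contradicting $\kappa(Y)=0$. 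The case $\kappa(Y)=-\infty$ is handled as in Section~\ref{sec:proof}: Conjecture~\ref{conj:2} makes $Y$ uniruled, and on a general member $\ell\cong\P^1$ of a covering family the restricted family $f\resto\ell$ has maximal variation if non-isotrivial, which by \cite[Thm.~1.4.iii]{VZ02} would produce a big invertible subsheaf of $\Sym^m\Omega^1_{\P^1}$ --- impossible, since $\Sym^m\Omega^1_{\P^1}$ is negative. Hence $f\resto\ell$ is isotrivial, the moduli map is constant along a covering family of rational curves, and therefore factors through the maximal rationally connected quotient $Y\ratmap Q$ with $\dim Q<\dim Y$, whence $\Var(f)\leq \dim Q<\dim Y$ --- again a contradiction.

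The remaining case $0<\kappa(Y)<\dim Y$ is where the one genuinely new point enters. As in Section~\ref{sec:proof} I would pass to the Iitaka fibration $\mathfrak i:Y\to Z$, with $\dim Z=\kappa(Y)$ and general fibre $F$ satisfying $\kappa(F)=0$ and $\dim F=\dim Y-\kappa(Y)\geq 1$. The key step is to show that maximal variation is \emph{inherited by the general fibre}, i.e.\ that $\Var(f\resto F)=\dim F$. This follows from a dimension count for the moduli map $\mu:Y\ratmap M$: the images $\mu(F_z)$ of the fibres sweep out $\mu(Y)$ in a family parametrised by $Z$, so $\dim Y=\Var(f)\leq \dim Z+\Var(f\resto F)=\kappa(Y)+\Var(f\resto F)$, forcing $\Var(f\resto F)\geq \dim F$ and hence equality. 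Once this is known, $f\resto F$ is a maximal-variation family of varieties of general type over $F$, so \cite[Thm.~1.4.iii]{VZ02} yields a big invertible subsheaf of some $\Sym^m\Omega^1_F$; since $\kappa(F)=0$, the argument of Section~\ref{subsect:kappa=0} applied to $F$ then shows $F$ uniruled, contradicting $\kappa(F)=0$. I expect the inheritance of maximal variation to the general fibre --- making precise that a general fibre of $\mathfrak i$ realises the full variation of the restricted family, and that the moduli-theoretic dimension count remains valid after the base changes implicit in the definition of $\Var$ --- to be the main obstacle; everything else is a direct transcription of Section~\ref{sec:proof}.
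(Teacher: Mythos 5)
Correct, and essentially the paper's own approach: the paper proves Theorem~\ref{thm:main-2} precisely by remarking that the case analysis of Section~\ref{sec:proof} goes through once \cite[Thm.~1.4.iii]{VZ02} replaces \cite[Thm.~1.4.i]{VZ02}, which is exactly what you carry out, including the forced substitution of the appeal to \cite{Kovacs96} by the Viehweg--Zuo argument over $\P^1$ in the $\kappa(Y)=-\infty$ case. The step you single out --- inheritance of maximal variation by the general Iitaka fibre via $\Var(f)\leq \dim Z + \Var(f\resto{F})$ --- is indeed the only genuinely new ingredient, and it is the same moduli-map dimension count that the paper already uses implicitly in Section~\ref{sec:proof} when it deduces $\Var(f)\leq \dim Y-\dim F$ from isotriviality of $f\resto{F}$.
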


\providecommand{\bysame}{\leavevmode\hbox to3em{\hrulefill}\thinspace}
\providecommand{\MR}{\relax\ifhmode\unskip\space\fi MR}
\providecommand{\MRhref}[2]{%
  \href{http://www.ams.org/mathscinet-getitem?mr=#1}{#2}
}
\providecommand{\href}[2]{#2}

\end{document}